\newtheorem{theorem}{Theorem}[section]
\newtheorem{definition}[theorem]{Definition}
\newtheorem{remark}[theorem]{Remark}
\let\originalleft\left
\let\originalright\right
\renewcommand{\left}{\mathopen{}\mathclose\bgroup\originalleft}
\renewcommand{\right}{\aftergroup\egroup\originalright}
\renewcommand{\d}{\/\mathrm{d}\/}
\def\w{\textbf{W}^{\varepsilon}_{{\theta}^{\varepsilon}}}
\def\S{\mathrm{S}}
\def\L{\mathbb{L}}
\def\A{\mathrm{A}}
\def\F{\mathrm{F}}
\def\C{\mathrm{C}}
\def\f{\mathbf{f}}
\def\J{\mathbb{J}}
\def\B{\mathrm{B}}
\def\D{\mathrm{D}}
\def\y{\mathbf{y}}
\def\Z{\mathrm{Z}}
\def\E{\mathbb{E}}
\def\x{\mathbf{x}}
\def\g{\mathbf{g}}
\def\z{\mathbf{z}}
\def\v{\mathbf{v}}
\def\w{\mathbf{w}}
\def\W{\mathrm{W}}
\def\G{\mathrm{G}}
\def\wi{\widetilde}
\def\u{\mathrm{u}}
\def\u{\mathbf{u}}
\def\H{\mathbb{H}}
\newcommand{\R}{\mathbb{R}}
\renewcommand{\d}{\/\mathrm{d}\/}
\newcommand{\Addresses}{{% additional braces for segregating \footnotesize
		\footnote{
			%	\footnotesize
			
			\noindent \textsuperscript{1}Department of Mathematics, Indian Institute of Technology Roorkee-IIT Roorkee,
			Haridwar Highway, Roorkee, Uttarakhand 247667, INDIA.\par\nopagebreak
			\noindent  \textit{e-mail:} \texttt{maniltmohan@ma.iitr.ac.in, maniltmohan@gmail.com.}
			
			\noindent \textsuperscript{*}Corresponding author.

			\textit{Key words:} convective Brinkman-Forchheimer equations, L\'evy noise, fraction Brownian motion, mild solution. 
			
			Mathematics Subject Classification (2010): Primary 76D06; Secondary: 35Q30, 76D03, 47D03.

}}}
\begin{document}
	
%	\linenumbers
	
	\title[Stochastic convective Brinkman-Forchheimer equations]{$\mathbb{L}^p$-solutions of deterministic and stochastic  convective Brinkman-Forchheimer equations
			\Addresses}
	\author[M. T. Mohan ]{Manil T. Mohan\textsuperscript{1*}}

	\maketitle
	
	\begin{abstract}
In the first part of this work, we establish the existence and uniqueness of a local mild solution to the deterministic convective Brinkman-Forchheimer (CBF) equations defined on the whole space, by using properties of the heat semigroup and fixed point arguments based on an iterative technique. The second part is devoted for establishing the existence and uniqueness of a pathwise mild solution upto a random time to the stochastic CBF equations perturbed by L\'evy noise by exploiting the contraction mapping principle. We also discuss the local solvability of the stochastic CBF equations subjected to fractional Brownian noise. 
	\end{abstract}

	\section{Introduction}\label{sec1}\setcounter{equation}{0}
\subsection{Deterministic convective Brinkman-Forchheimer equations}	The Cauchy problem for the convective Brinkman-Forchheimer equations (CBF) in $\mathbb{R}^d,d\geq 2$ can be written as 
	\begin{align}\label{1}
\frac{\partial \u(t,x)}{\partial t}-\mu \Delta\u(t,x)&+(\u(t,x)\cdot\nabla)\u(t,x)+\alpha\u(t,x)+\beta|\u(t,x)|^{r-1}\u(t,x)\nonumber\\+\nabla p(t,x)&=\mathbf{f}(t,x), \ \text{ in } \ (0,T)\times\R^d, 
	\end{align}
	with the conditions
	\begin{equation}\label{2}
	\left\{
	\begin{aligned}
	\nabla\cdot\u(t,x)&=0, \ \text{ in } \ (0,T)\times\R^d, \\
	\u(0,x)&=\u^0(x) \ \text{ in } \ \{0\}\times\R^d,\\
	|\u(t,x)|&\to 0\ \text{ as }\ |x|\to\infty, \ t\in(0,T). 
		\end{aligned}\right.
	\end{equation}
	 In \eqref{1}, $\u(t , x) \in \R^d$ stands for the velocity field at time $t$ and position $x$, $p(t,x)\in\R$ represents the pressure field, $\f(t,x)\in\R^d$ is an external forcing.  The constant $\mu$ denotes the positive Brinkman coefficient (effective viscosity), the positive constants $\alpha$ and $\beta$ represent the Darcy (permeability of porous medium) and Forchheimer (proportional to the porosity of the material) coefficients, respectively. For $\alpha=\beta=0$, we obtain the classical Navier-Stokes equations (NSE). The absorption exponent $r\in[1,\infty)$ and the case $r=3$ is known as the critical exponent.  The critical homogeneous CBF equations \eqref{1} have the same scaling as NSE only when $\alpha=0$ (see Proposition 1.1, \cite{KWH} and no scale invariance property for other values of $\alpha$ and $r$).  Since $\alpha$ does not play a major role in our analysis, we fix $\alpha=0$ and we scale $\mu$ and $\beta$ to unity in the rest of the paper. The existence and uniqueness of weak as well as strong solutions of the system \eqref{1}-\eqref{2} in the whole space and periodic domains is discussed in the works \cite{ZCQJ,PAM,KWH,MTM5}, etc and the references therein.

	 \subsection{Abstract formulation and mild solution}
	 The \emph{Helmholtz-Hodge projection} denoted by $\mathscr{P}$ is a bounded linear operator from  $\L^p(\R^d)$ to $\J_p:=\mathscr{P}\L^p(\R^d)$, $1<p<\infty$. Note that the space $\J_p$ is a separable Banach space with $\L^p(\R^d)$-norm denoted by $\|\cdot\|_p$ and the operator $\mathscr{P}$ is an orthogonal projection of $\L^2(\R^d)$ onto the subspace $\H:=\J_2$. Remember that $\mathscr{P}$ can be expressed in terms of the Riesz transform (cf. \cite{MTSS} for more details). We use the notation $\mathcal{L}(\H,\J_p)$ for the space of all bounded linear operators from $\H$ to $\J_p$.  Let us apply the projection operator $\mathscr{P}$ to the system (\ref{1}) to obtain
	 \begin{equation}\label{3}
	 \left\{
	 \begin{aligned}
	 \frac{\d\u(t)}{\d t}+\A\u(t)+\B(\u(t))+\mathcal{C}(\u(t))&=\mathscr{P}\f(t), \ t\in(0,T),\\
	 \u(0)=\x, 
	 \end{aligned}\right. 
	 \end{equation}
	 where
	 \begin{align*}
	 \A\u&=-\mathscr{P}\Delta\u,\textrm{ with domain }\D_p(\A)=\D_p(\Delta)\cap\J_p,\\
	 \B(\u)&=\B(\u,\u),\textrm{ with
	 }\B(\u,\v)=\mathscr{P}[(\u\cdot\nabla)\v]=\mathscr{P}[\nabla\cdot(\u\otimes\v)],\\
	 \mathcal{C}(\u)&=\mathscr{P}[|\u|^{r-1}\u],
	 \end{align*}
	 and $\x\in\J_p$. For $r\geq 1$, the operator $\mathcal{C}(\cdot)$ is Gateaux differentiable with the Gateaux derivative 
	 \begin{align}\label{29}
	 \mathcal{C}'(\u)\v&=\left\{\begin{array}{cl}\mathscr{P}(\v),&\text{ for }r=1,\\ \left\{\begin{array}{cc}\mathscr{P}(|\u|\v)+(r-1)\mathscr{P}\left(\frac{\u}{|\u|^{3-r}}(\u\cdot\v)\right),&\ \text{ if }\ \u\neq \mathbf{0},\\\mathbf{0},&\ \text{ if }\ \u=\mathbf{0},\end{array}\right.&\text{ for } 1<r<3,\\ \mathscr{P}(|\u|^{r-1}\v)+(r-1)\mathscr{P}(\u|\u|^{r-3}(\u\cdot\v)), &\text{ for }r\geq 3,\end{array}\right.
	 \end{align}
	 for all $\u,\v\in\L^{p}(\R^d)$, for $p\in[2,\infty)$. It should be recalled that $\mathscr{P}\Delta=\Delta\mathscr{P}$ (cf. \cite{MTSS}), and hence $\A$ is essentially equal to $-\Delta$ and $e^{-t\A}$ is substantially the heat semigroup (Gauss-Weierstrass semigroup, \cite{Kato5}) and is given by $$(e^{-t\A}\u)(x)=\int_{\R^d}\Psi(t,x-y)\u(y)\d y, \ \text{ where }\ \Psi(t,x)=\frac{1}{(4\pi t)^{\frac{d}{2}}}e^{-\frac{|x|^2}{4t}}, \ t>0,\ x\in\R^d,$$ and $\u\in\L^q(\R^d)$, $q\in[1,\infty)$. Thus, the operator system \eqref{3} can be  transformed into a nonlinear integral equation as follows: 
	 \begin{align}\label{4}
	 \u(t)=e^{-t\A}\x-\int_0^te^{-(t-s)\A}[\B(\u(s))+\mathcal{C}(\u(s))]\d s+\int_0^te^{-(t-s)\A}\mathscr{P}\f(s)\d s,
	 \end{align}
	 for all $t\in[0,T]$. For a given $\x\in\J_p$ and $\f\in\mathrm{L}^1(0,T;\J_p)$, a function $\u\in\C([0,T];\J_p),$ for $\max\left\{d,\frac{d(r-1)}{2}\right\}<p<\infty$ satisfying \eqref{4} is called a \emph{mild solution} to the system \eqref{3}.

	 Since $e^{-t\A}$ is an analytic semigroup, we infer that $e^{-t\A}:\L^p\to\L^q$ is a bounded map whenever $1<p\leq q<\infty$ and $t>0$, and there exists a constant $C$ depending on $p$ and $q$ such that (see \cite{Kato5})
	 \begin{align}
	 \|e^{-t\A}\g\|_{q}&\leq Ct^{-\frac{d}{2}\left(\frac{1}{p}-\frac{1}{q}\right)}\|\g\|_p,\label{1.6}\\
	 \|\nabla e^{-t\A}\g\|_{q}&\leq Ct^{-\frac{1}{2}-\frac{d}{2}\left(\frac{1}{p}-\frac{1}{q}\right)}\|\g\|_p,\label{1.5}
	 \end{align}
	 for all $t\in(0,T]$ and $\g\in\L^p(\R^d)$. Using the estimates  \eqref{1.6}-\eqref{1.5}, one can estimate $\|e^{-t\A}\B(\u,\v)\|_p$ as 
	 \begin{align}\label{1.7}
	 \|e^{-t\A}\B(\u,\v)\|_p&\leq Ct^{-\left(\frac{1}{2}+\frac{d}{2p}\right)}\|\u\|_p\|\v\|_p, 
	 \end{align}
	 for all $t\in(0,T]$ and $\u,\v\in\J_p$. Furthermore, using the estimate \eqref{1.6}, we calculate $\|e^{-t\A}\mathcal{C}(\u)\|_{p}$ and $\|e^{-t\A}\mathcal{C}'(\u)\v\|_{p}$ as 
	 \begin{align}
	 \|e^{-t\A}\mathcal{C}(\u)\|_{p}&\leq Ct^{-\frac{d(r-1)}{2p}}\|\u\|_{p}^{r},\label{1.8}\\
	   \|e^{-t\A}\mathcal{C}'(\u)\v\|_{p}&\leq Ct^{-\frac{d(r-1)}{2p}}\|\u\|_{p}^{r-1}\|\v\|_{p},\label{1.9}
	 \end{align}
	  for all $t\in(0,T]$ and $\u,\v\in\J_p$. For the existence of local mild solution in $\L^p$ to the 3D NSE in whole space and bounded domains, the interested readers are referred to see \cite{FJR,FBW1,Kato5,YGTM}, etc. 
	 
	\subsection{Stochastic CBF equations perturbed by L\'evy noise} Let $(\Omega,\mathscr{F},\mathbb{P})$ be a complete probability space equipped with an increasing family of sub-sigma fields $\{\mathscr{F}_t\}_{0\leq t\leq T}$ of $\mathscr{F}$ satisfying  the usual conditions.  On taking the external forcing  as L\'evy noise, one can rewrite the stochastic counterpart of the problem \eqref{3} for $t\in(0,T)$ as 
	  \begin{equation}\label{5}
	 \left\{
	 \begin{aligned}
	 {\d\u(t)}+[\A\u(t)+\B(\u(t))+\mathcal{C}(\u(t))]\d t&=\Phi\d\W(t)+\int_{\mathrm{Z}}\gamma(s-,z)\widetilde{\pi}(\d s,\d z), \\
	 \u(0)&=\x.
	 \end{aligned}\right. 
	 \end{equation}
	 In \eqref{5}, 	 $\mathrm{W}=\{\W(t)\}_{0\leq t\leq T}$ is a cylindrical	 Wiener process and for an orthonormal basis	 $\{e_j(x)\}_{j=1}^{\infty}$ in $\H:=\J_2$, $\mathrm{W}(\cdot)$ can be	 represented as	 $\mathrm{W}(t)=\displaystyle{\sum_{j=1}^{\infty}}e_j(x)\beta_j(t)$,  where $\{\beta_j(\cdot)\}_{j=1}^{\infty}$'s are a sequence of one-dimensional mutually	 independent Brownian motions (\cite{DaZ}). The bounded linear operator
	 $\Phi:\H\to\J_p$, $p\in[2,\infty)$ is a $\gamma$-radonifying
	 operator in $\J_p$  such that
	 \begin{align*}
	 \Phi\d\mathrm{W}(t)=\sum_{j=1}^{\infty}\Phi e_j(x)\d\beta_j(t)=
	 \sum_{j=1}^{\infty}\int_{\R^d}\mathscr{K}(x,y)e_j(y)\d y\d
	 \beta_j(t),
	 \end{align*}
	 where $\mathscr{K}(\cdot,\cdot)$ is the kernel of the operator
	 $\Phi$ (Theorem 2.2, \cite{BLH}). In particular, the operator $\Phi\in\gamma(\H,\J_p)$
	 satisfies 
	 	$$\|\Phi\|_{\gamma(\H,\J_p)}
	 	\equiv\left\{\displaystyle{\int_{\R^d}}\left[\displaystyle{\int_{\R^d}}|\mathscr{K}(x,y)|^2\d
	 	y\right]^{p/2}\d x\right\}^{1/p}<+\infty,$$
	 where $\gamma(\H,\J_p)$ is the space of all $\gamma$-radonifying
	 operators  from $\H$ to $\J_p$.\footnote{Let $\mathbb{U}$ be a real separable Hilbert space and $\mathbb{X}$ be a Banach space. A bounded linear operator $R\in\mathcal{L}(\mathbb{U},\mathbb{X})$ is $\gamma$-radonifying provided that there exists a centered Gaussian probability $\nu$ on $\mathbb{X}$ such that $\int_{\mathbb{X}}\varphi(x)\d\nu(x)=\|R^*\varphi\|_{\mathbb{U}},\ \varphi\in\mathbb{X}^*$. Such a measure is at most one, and hence we set $\|R\|_{\gamma(\mathbb{U},\mathbb{X})}^2:=\int_{\mathbb{X}}\|x\|_{\mathbb{X}}^2\d\nu(x).$ We denote $\gamma(\mathbb{U},\mathbb{X})$ for the space of $\gamma$-radonifying operators, and $\gamma(\mathbb{U},\mathbb{X})$ equipped with the norm $\|\cdot\|_{\gamma(\mathbb{U},\mathbb{X})}$ is a separable Banach space.} 
	 
	 Let us denote by $\Z$, a measurable subspace of some Hilbert space (for example
	 measurable subspaces of $\R^d$, $\L^2(\R^d)$, etc) and $\lambda(\d
	 z)$,  a $\sigma$-finite L\'{e}vy measure on $\Z$ with an associated
	 Poisson random measure $\pi(\d t,\d z)$. We define $\wi\pi(\d t,\d
	 z):=\pi(\d t,\d z)-\lambda(\d z)\d t$ as the compensated
	 Poisson random measure. The jump noise coefficient
	 $\gamma(t,z):=\gamma(t,z,x)$ is such that $\gamma:[0,T]\times \Z\times
	 \J_p \to\J_p$, $p\in[2,\infty)$ and in particular, $\gamma$ satisfies
	 $$\displaystyle{\int_0^T\int_{\Z}}\|\gamma(t,z)\|_p^2\lambda(\d z)\d
	 	t<+\infty.$$ The processes
	 $\mathrm{W}(\cdot)$ and $\pi(\cdot,\cdot)$ are mutually
	 independent. The existence and uniqueness of pathwise strong solutions to the stochastic CBF equations and related models perturbed by Gaussian as well as jump noises in the whole space or periodic domains are available in the literature and the interested readers are referred to see \cite{MRXZ1,ZBGD,ZDRZ,MTM4,MTM6}, etc, and the references therein.

	 We transform the operator system \eqref{3}  into a stochastic nonlinear integral equation as follows: 
	 \begin{equation}\label{6}
	 \begin{aligned}
	 \u(t)&=e^{-t\A}\x-\int_0^te^{-(t-s)\A}[\B(\u(s))+\mathcal{C}(\u(s))]\d s+\int_0^te^{-(t-s)\A}\Phi\d \W(s)\\&\quad+\int_0^t\int_{\Z}e^{-(t-s)\A}\gamma(s-,z)\wi\pi(\d s,\d z),
	 \end{aligned}
 \end{equation}
	 for all $t\in[0,T]$.  The existence of pathwise mild solutions for 2D and 3D NSE perturbed by Gaussian as well as jump noise is  available in \cite{GDJZ,FRS,MTSS,JZZB1}, etc and the references therein. 
	 
	 	\subsection{Stochastic CBF equations perturbed by fractional Brownian noise} Let us now consider the stochastic CBF equations perturbed by fractional Brownian noise as 
	 \begin{equation}\label{1.13}
	\left\{
	\begin{aligned}
	{\d\u(t)}+[\A\u(t)+\B(\u(t))+\mathcal{C}(\u(t))]\d t&=\Phi\d\W^{H}(t), \\
	\u(0)&=\x.
	\end{aligned}\right. 
	\end{equation}
	where $\Phi\in\mathcal{L}(\H,\J_p)$ and $\W^{H}(\cdot)$ is the cylindrical fractional Brownian motion with Hurst parameter $H\in\left(\max\left\{\frac{1}{2},\frac{d}{4}\right\},1\right),$ where $d=2,3$ (see section \ref{sec4} for more details). One can transform the operator system \eqref{1.13}  into a stochastic nonlinear integral equation as
	\begin{equation}\label{1.14}
	\begin{aligned}
	\u(t)&=e^{-t\A}\x-\int_0^te^{-(t-s)\A}[\B(\u(s))+\mathcal{C}(\u(s))]\d s+\int_0^te^{-(t-s)\A}\Phi\d \W^{H}(s),
	\end{aligned}
	\end{equation}
	for all $t\in[0,T]$. For the well-posedness and existence of density for 2D stochastic NSE perturbed by fractional Brownian noise, we refer the interested readers to \cite{LFPS,EHPA}, respectively. 
	 \subsection{Major objectives}
	 The purpose of this work is two folded. 
	 \begin{enumerate}
	 	\item [(i)] In the first part, we show the existence of a unique local mild solution to the deterministic CBF equations \eqref{3} in $\L^p$-spaces, for $\max\left\{d,\frac{d(r-1)}{2}\right\}<p<\infty$, $d\geq 2$ (Theorem \ref{th2.1}). 
	 	\item [(ii)] In the second part, we consider the stochastic counterpart of the problem considered in part (i). 
	 	\begin{itemize}\item [(a)] We first establish the existence and uniqueness of pathwise local mild solutions (up to a random time) to the stochastic CBF equations perturbed by additive L\'evy noise in $\J_p$, for $\max\left\{d,\frac{d(r-1)}{2}\right\}<p<\infty,$ $d\geq 2$ (Theorems \ref{thm3.2} and \ref{thm3.3}). \item [(b)] By considering the noise as fractional Brownian motion, we show the existence and uniqueness of local pathwise mild solutions (up to a random time)  to the stochastic  CBF equations in $\J_p$, for  $\max\left\{d,\frac{d(r-1)}{2}\right\}<p<\infty$ and $\max\left\{\frac{1}{2},\frac{d}{4}\right\}<H<1,$ $d=2,3$ (Theorem \ref{thm3.4}).  \item [(c)]  Finally, we discuss the local solvability of  stochastic CBF equations perturbed by $\alpha$-regular Volterra processes in $\J_p$, for $\max\left\{d,\frac{d(r-1)}{2}\right\}<p<\infty$, $d=2,3$ (Remark \ref{rem4.3}). 
	 		\end{itemize}
	 \end{enumerate}
The difficulty in establishing the existence of local mild solutions to the deterministic CBF equations \eqref{3} lies under estimating the nonlinear terms, which we successfully overcame using the estimates \eqref{1.7}-\eqref{1.9}. On the stochastic counterpart along with these difficulties, additional complication arise due to the presence of noise term (proper regularity of  stochastic convolution). In the case of L\'evy noise,  we handle this obstacle by using the results obtained in \cite{JZZB}. For the fractional Brownian noise and $\alpha$-regular Volterra processes, we overcame this hurdle by using the stochastic convolution results established in \cite{PCBM}. Thus, making use of the estimates \eqref{1.6}-\eqref{1.9} and fixed point arguments (iterative technique) or contraction mapping principle to achieve our goals. It can be easily seen that in the sub-critical and critical cases (that is, for $r\in[1,3]$), the condition on $p$ is $d<p<\infty$, which same as that of NSE (cf. \cite{FJR,FRS}, etc) and for the super-critical case (that is, $r\in(3,\infty)$), the condition on $p$ becomes $\frac{d(r-1)}{2}<p<\infty$.

	\section{Existence and Uniqueness of Deterministic CBF equations}\label{sec2}\setcounter{equation}{0}
In this section, we present the existence and uniqueness of local mild solution to the problem \eqref{3}. We use fixed point arguments (by using a simple iterative technique) to obtain the required result. 

\begin{theorem}\label{th2.1}
For $\max\left\{d,\frac{d(r-1)}{2}\right\}<p<\infty$,	let $\x\in\J_p$ and $\f\in\mathrm{L}^1(0,T;\J_p)$ be given. Then, there exists a time $0<T_*<T$ such that \eqref{3} has a unique mild solution given by \eqref{4} in $\C([0,T_*];\J_p)$. 
\end{theorem}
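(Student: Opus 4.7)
The plan is to apply the Banach contraction mapping principle to the integral operator defined by the right-hand side of \eqref{4}, on a suitable closed ball of $\C([0,T_*];\J_p)$ for some $T_*\in(0,T)$ to be chosen small enough. For $K>\|\x\|_p$, introduce the complete metric space
\begin{equation*}
\Sigma(K,T_*):=\bigl\{\u\in\C([0,T_*];\J_p)\,:\,\sup_{t\in[0,T_*]}\|\u(t)\|_p\leq K\bigr\},
\end{equation*}
endowed with the supremum norm, and define the map $\G:\Sigma(K,T_*)\to\C([0,T_*];\J_p)$ by
\begin{equation*}
\G(\u)(t):=e^{-t\A}\x-\int_0^te^{-(t-s)\A}\bigl[\B(\u(s))+\mathcal{C}(\u(s))\bigr]\d s+\int_0^te^{-(t-s)\A}\mathscr{P}\f(s)\d s.
\end{equation*}
Fixed points of $\G$ are precisely the mild solutions given by \eqref{4}.

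The first step is to show $\G$ maps $\Sigma(K,T_*)$ into itself. Using contractivity of $e^{-t\A}$ on $\J_p$ for the linear and forcing terms, together with the key estimates \eqref{1.7} and \eqref{1.8} for the nonlinear terms, I would obtain the bound
\begin{equation*}
\|\G(\u)(t)\|_p\leq \|\x\|_p+C\int_0^{T_*}\|\f(s)\|_p\d s+C\,{T_*}^{\frac{1}{2}-\frac{d}{2p}}K^2+C\,{T_*}^{1-\frac{d(r-1)}{2p}}K^r,
\end{equation*}
where the integrals $\int_0^t(t-s)^{-(1/2+d/(2p))}\d s$ and $\int_0^t(t-s)^{-d(r-1)/(2p)}\d s$ converge precisely because the hypothesis $p>\max\{d,d(r-1)/2\}$ makes both exponents strictly less than $1$; this is where the restriction on $p$ is used in an essential way. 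Choosing first $K>\|\x\|_p+C\int_0^T\|\f(s)\|_p\d s$ and then $T_*$ small enough, the right-hand side stays below $K$, so that $\G(\u)\in\Sigma(K,T_*)$. Continuity of $t\mapsto\G(\u)(t)$ in $\J_p$ follows from the strong continuity of the semigroup and dominated convergence applied to the two singular integrals.

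Next, for $\u_1,\u_2\in\Sigma(K,T_*)$, I would estimate $\|\G(\u_1)(t)-\G(\u_2)(t)\|_p$ by splitting the difference of the nonlinear terms. For the bilinear term, writing $\B(\u_1)-\B(\u_2)=\B(\u_1-\u_2,\u_1)+\B(\u_2,\u_1-\u_2)$ and applying \eqref{1.7} yields a bound of order ${T_*}^{\frac{1}{2}-\frac{d}{2p}}K\,\sup_{[0,T_*]}\|\u_1-\u_2\|_p$. For the absorption term, Taylor's formula $\mathcal{C}(\u_1)-\mathcal{C}(\u_2)=\int_0^1\mathcal{C}'(\theta\u_1+(1-\theta)\u_2)(\u_1-\u_2)\d\theta$ combined with \eqref{1.9} and the convexity bound on $\|\theta\u_1+(1-\theta)\u_2\|_p^{r-1}$ yields a bound of order ${T_*}^{1-\frac{d(r-1)}{2p}}K^{r-1}\sup_{[0,T_*]}\|\u_1-\u_2\|_p$. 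Thus
\begin{equation*}
\sup_{t\in[0,T_*]}\|\G(\u_1)(t)-\G(\u_2)(t)\|_p\leq C\Bigl({T_*}^{\frac{1}{2}-\frac{d}{2p}}K+{T_*}^{1-\frac{d(r-1)}{2p}}K^{r-1}\Bigr)\sup_{t\in[0,T_*]}\|\u_1(t)-\u_2(t)\|_p,
\end{equation*}
and a further reduction of $T_*$ (compatible with the self-map condition above) makes the prefactor strictly less than $1$.

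The main obstacle is the joint choice of $K$ and $T_*$: the self-map condition forces $K$ to exceed the data norm plus forcing contribution, while the contraction condition forces $T_*$ to be small in a way that depends on $K$ through the factor $K^{r-1}$, which can be large in the super-critical regime $r>3$. However, since both exponents of $T_*$ are strictly positive under the standing assumption $p>\max\{d,d(r-1)/2\}$, one can first fix $K$ and then pick $T_*$ small accordingly. The Banach contraction mapping principle then produces a unique fixed point in $\Sigma(K,T_*)$, which is the desired mild solution in $\C([0,T_*];\J_p)$. Uniqueness in the whole space $\C([0,T_*];\J_p)$ (not only within the ball) follows by a standard argument: if $\u_1,\u_2$ are two mild solutions on a common interval, they both lie in some larger ball on a possibly smaller subinterval, and the contraction estimate above forces them to coincide there, after which a connectedness/continuation argument propagates uniqueness to $[0,T_*]$.
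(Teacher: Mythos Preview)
Your proposal is correct. The paper's published proof takes a closely related but presentationally different route: instead of invoking the Banach contraction mapping principle on the ball $\Sigma(K,T_*)$, it runs the Picard iteration $\u_{n+1}=\u_0+\G(\u_n)$ explicitly, first bounding the sequence $f_n=\sup_{[0,T]}\|\u_n\|_p$ by induction via the recurrence $f_{n+1}\le f_0+CT^{\frac12-\frac{d}{2p}}f_n^2+CT^{1-\frac{d(r-1)}{2p}}f_n^r$, and then showing that the telescoping differences $\v_{n+2}=\u_{n+2}-\u_{n+1}$ satisfy a geometric estimate so that $\sum\v_n$ converges uniformly. Your contraction argument packages exactly the same two ingredients (self-map bound plus Lipschitz bound) into a single appeal to Banach's theorem, and indeed an earlier draft of the paper (visible in the commented-out block preceding the theorem) contains essentially your argument verbatim. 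The iterative presentation has the mild advantage of displaying the approximating sequence explicitly; your version is shorter and also makes the joint choice of $K$ and $T_*$ slightly more transparent. Your treatment of uniqueness in all of $\C([0,T_*];\J_p)$ via a local contraction plus continuation is a bit more careful than the paper's, which simply repeats the Lipschitz estimate on a possibly smaller interval.
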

\begin{proof}
 As discussed in \cite{FJR,Kato5}, etc, in order to prove the theorem, we use an iterative technique. Let us set
 \begin{align}
 \u_0(t)&=e^{-t\A}\x,\\
 \u_{n+1}(t)&=\u_0+\G(\u_n)(t), \ n=0,1,2,\ldots,
 \end{align}
 where $$\G(\u)(t)=-\int_0^te^{-(t-s)\A}[\B(\u(s))+\mathcal{C}(\u(s))]\d s+\int_0^te^{-(t-s)\A}\mathscr{P}\f(s)\d s,$$  which is continuous for all $t\in[0,T]$. Since $e^{-t\A}$ is a contraction semigroup on $\L^p(\R^d)$, first we note that $$\|\u_0(t)\|_{p}=\|e^{-t\A}\x\|_{p}\leq  \|\x\|_{p}.$$ Using the estimates \eqref{1.6}-\eqref{1.9}, we find 
 \begin{align}
 \|\u_{n+1}(t)\|_{p}&\leq\|\x\|_{p}+ \int_0^t\|e^{-(t-s)\A}\B(\u_n(s))\|_{p}\d s+\int_0^t\|e^{-(t-s)\A}\mathcal{C}(\u_n(s))\|_{p}\d s\nonumber\\&\quad+\int_0^t\|e^{-(t-s)\A}\mathscr{P}\f(s)\|_{p}\d s\nonumber\\&\leq \|\x\|_{p}+C\int_0^t(t-s)^{-\left(\frac{1}{2}+\frac{d}{2p}\right)}\|\u_n(s)\|_{p}^2\d s+C\int_0^t(t-s)^{-\frac{d(r-1)}{2p}}\|\u_n(s)\|_{p}^r\d s\nonumber\\&\quad+C\int_0^t\|\f(s)\|_{p}\d s\nonumber\\&\leq \left\{\|\x\|_{p}+C\int_0^t\|\f(s)\|_{p}\d s\right\}+Ct^{\frac{1}{2}-\frac{d}{2p}}\sup_{s\in[0,t]}\|\u_n(s)\|_{p}^2+Ct^{1-\frac{d(r-1)}{2p}}\sup_{s\in[0,t]}\|\u_n(s)\|_{p}^r\nonumber\\&\leq \left\{\|\x\|_{p}+C\int_0^{T}\|\f(s)\|_{p}\d s\right\}+C{T}^{\frac{1}{2}-\frac{d}{2p}}f_n^2+C{T}^{1-\frac{d(r-1)}{2p}}f_n^r, 
 \end{align}
 for all $t\in[0,T]$, where $$f_n=\sup_{t\in[0,T]}\|\u_n(t)\|_{p}.$$ For $f_0=\left\{\|\x\|_{p}+C\int_0^{T}\|\f(s)\|_{p}\d s\right\}$, from the above relation, it is immediate that 
 \begin{align}
 f_{n+1}\leq f_0+C{T}^{\frac{1}{2}-\frac{d}{2p}}f_n^2+C{T}^{1-\frac{d(r-1)}{2p}}f_n^r,\ n=0,1,2,\ldots,
 \end{align} 
 which is a nonlinear recurrence relation. One can easily show by induction that  if $$\frac{1}{2}\min\left\{\left(\frac{1}{4C{T}^{\frac{1}{2}-\frac{d}{2p}}}\right),\left(\frac{1}{4C{T}^{1-\frac{d(r-1)}{2p}}}\right)^{\frac{1}{r-1}}\right\}>f_0,$$ then $$f_n\leq \min\left\{\left(\frac{1}{4C{T}^{\frac{1}{2}-\frac{d}{2p}}}\right),\left(\frac{1}{4C{T}^{1-\frac{d(r-1)}{2p}}}\right)^{\frac{1}{r-1}}\right\}=:K, \ \text{ for all } \ n=1,2,3,\ldots,$$ so that the sequence $\{f_n\}$ is uniformly bounded. 
 
 Let us now consider 
 \begin{align}
\v_{n+2}(t)&= \u_{n+2}(t)-\u_{n+1}(t)\nonumber\\&= -\int_0^te^{-(t-s)\A}[\B(\u_{n+1}(s))-\B(\u_n(s))]\d s-\int_0^te^{-(t-s)\A}[\mathcal{C}(\u_{n+1}(s))-\mathcal{C}(\u_n(s))]\d s,
 \end{align}
 for all $t\in[0,T]$. Once again using the estimates \eqref{1.6}-\eqref{1.9}, we obtain 
 \begin{align}\label{210}
\|\v_{n+2}(t)\|_{p}&=\| \u_{n+2}(t)-\u_{n+1}(t)\|_{p}\nonumber\\& \leq \int_0^t\|e^{-(t-s)\A}\B(\u_{n+1}(s)-\u_{n}(s),\u_{n+1}(s))\|_{p}\d s\nonumber\\&\quad+ \int_0^t\|e^{-(t-s)\A}\B(\u_{n}(s),\u_{n+1}(s)-\u_{n}(s))\|_{p}\d s\nonumber\\&\quad+\int_0^t\left\|e^{-(t-s)\A}\int_0^1\mathcal{C}'(\theta\u_{n+1}(s)+(1-\theta)\u_n(s))(\u_{n+1}(s)-\u_n(s))\d\theta\right\|_{p}\d s\nonumber\\&\leq C\int_0^t(t-s)^{-\left(\frac{1}{2}+\frac{d}{2p}\right)}(\|\u_{n+1}(s)\|_{p}+\|\u_n(s)\|_{p})\|\u_{n+1}(s)-\u_n(s)\|_{p}\d s\nonumber\\&\quad+C\int_0^t(t-s)^{-\frac{d(r-1)}{2p}}(\|\u_{n+1}(s)\|_{p}^{r-1}+\|\u_n(s)\|_{p}^{r-1})\|\u_{n+1}(s)-\u_n(s)\|_{p}\d s\nonumber\\&\leq C{t}^{\frac{1}{2}-\frac{d}{2p}}\sup_{s\in[0,t]}\left(\|\u_{n+1}(s)\|_{p}+\|\u_{n}(s)\|_{p}\right)\sup_{s\in[0,t]}\|\v_{n+1}(s)\|_{p} \nonumber\\&\quad+Ct^{1-\frac{d(r-1)}{2p}}\sup_{s\in[0,t]}\left(\|\u_{n+1}(s)\|_{p}^{r-1}+\|\u_{n}(s)\|_{p}^{r-1}\right)\sup_{s\in[0,t]}\|\v_{n+1}(s)\|_{p},
\end{align}
for all $t\in[0,T]$. Therefore, we deduce that 
\begin{align}\label{211}
\sup_{t\in[0,T]}\| \v_{n+2}(t)\|_{p}&\leq C\left(K{T}^{\frac{1}{2}-\frac{d}{2p}}+K^{r-1}T^{1-\frac{d(r-1)}{2p}}\right)\sup_{t\in[0,T]}\|\v_{n+1}(t)\|_{p}\nonumber\\&\leq C^{n+1}\left(K{T}^{\frac{1}{2}-\frac{d}{2p}}+K^{r-1}T^{1-\frac{d(r-1)}{2p}}\right)^{n+1}\sup_{t\in[0,T]}\|\v_1(t)\|_{p}\nonumber\\&\leq 2KC^{n+1}\left(K{T}^{\frac{1}{2}-\frac{d}{2p}}+K^{r-1}T^{1-\frac{d(r-1)}{2p}}\right)^{n+1}, \ n=0,1,2,\ldots.
\end{align}
Let us now consider the infinite series of the form
\begin{align}\label{212}\u_0(t)+\v_1(t)+\v_2(t)+\cdots+\v_n(t)+\cdots.\end{align} The $n^{\mathrm{th}}$ partial sum of the series is $\u_n(t)$, that is, \begin{align}\label{213}\u_n(t)=\u_0(t)+\sum_{m=0}^{n-1}\v_{m+1}(t).\end{align} Therefore, the sequence $\{\u_n(t)\}$ converges if and only if the  series \eqref{212} converges. From the inequality \eqref{211}, we have 
\begin{align}
&\sup_{t\in[0,T]}\|\u_0(t)\|_{p}+\sum_{m=0}^{\infty}\sup_{t\in[0,T]}\|\v_{m+1}(t)\|_{p}\nonumber\\&\leq \frac{K}{2}+\sum_{m=0}^{\infty}2KC^m\left(K{T}^{\frac{1}{2}-\frac{d}{2p}}+K^{r-1}T^{1-\frac{d(r-1)}{2p}}\right)^m\nonumber\\&=\frac{K}{2}+\frac{2K}{1-C\left(K{T}^{\frac{1}{2}-\frac{d}{2p}}+K^{r-1}T^{1-\frac{d(r-1)}{2p}}\right)}<+\infty,
\end{align}
provided $$C\left(K{T}^{\frac{1}{2}-\frac{d}{2p}}+K^{r-1}T^{1-\frac{d(r-1)}{2p}}\right)<1.$$ Thus, we can choose a time $0<T_*<T$ in such a way that the above condition is satisfied. Therefore the series \eqref{212} converges uniformly in $[0,T_*]$ and we denote the sum of the series by $\u(t)$. Then, the relation \eqref{213} provides $$\u(t)=\lim_{n\to\infty}\u_n(t).$$ The uniform convergence of $\u_n(t)$ to $\u(t)$ and the continuity of the operator $\B(\cdot)+\mathcal{C}(\cdot)$ gives us $$\u(t)=\u_0+\G(\u)(t),$$ which is a mild solution to the problem \eqref{3} in the interval $[0,T_*]$. The continuity of the function $\u(\cdot)$ follows from the uniform convergence and the continuity of the sequence $\{\u_n(\cdot)\}_{n=0}^{\infty}$. 

Let us now show the uniqueness. Let $\u_1(\cdot)$ and $\u_2(\cdot)$ be two local mild solutions of  the problem \eqref{3}. Then $\u=\u_1-\u_2$ satisfies: 
\begin{align}
\u(t)=-\int_0^te^{-(t-s)\A}[\B(\u_1(s))-\B(\u_2(s))]\d s-\int_0^te^{-(t-s)\A}[\mathcal{C}(\u_1(s))-\mathcal{C}(\u_2(s))]\d s.
\end{align}
A calculation similar to \eqref{210} yields 
\begin{align}
\|\u(t)\|_{p}&\leq C{T_*}^{\frac{1}{2}-\frac{d}{2p}}\sup_{t\in[0,T_*]}\left(\|\u_{1}(s)\|_{p}+\|\u_{2}(s)\|_{p}\right)\sup_{t\in[0,T_*]}\|\u(s)\|_{p} \nonumber\\&\quad+C{T_*}^{1-\frac{d(r-1)}{2p}}\sup_{t\in[0,T_*]}\left(\|\u_{1}(s)\|_{p}^{r-1}+\|\u_{2}(s)\|_{p}^{r-1}\right)\sup_{t\in[0,T_*]}\|\u(s)\|_{p}\nonumber\\&\leq C\left(K{T_*}^{\frac{1}{2}-\frac{d}{2p}}+K^{r-1}{T_*}^{1-\frac{d(r-1)}{2p}}\right)\sup_{t\in[0,T_*]}\|\u(s)\|_{p},
\end{align}
for all $t\in[0,T^*]$. One can choose a $T_*$ such that $C\left(K{T_*}^{\frac{1}{2}-\frac{d}{2p}}+K^{r-1}{T_*}^{1-\frac{d(r-1)}{2p}}\right)<1$ and hence the uniqueness of $\u\in\C([0,T_*];\J_p)$ follows. 
\end{proof}

	\section{Existence and Uniqueness of Stochastic CBF equations}\label{sec3}\setcounter{equation}{0}
	This section is devoted for establishing the existence and uniqueness of mild solution up to a random time to the system \eqref{5}. We use the contraction mapping principle to obtain the required result. 
	\subsection{The linear problem}
	For $p\in[2,\infty)$, we know that $e^{-t\A}$ is a $\C_0$-contraction
	semigroup on $\L^p(\R^d)$, and $\L^p(\R^d)$ is an martingale type 2 Banach space and also a $2$-smooth Banach space.
	\iffalse 
	\begin{theorem}\label{modn}
		Let $e^{-t\A}$ be a $C_0$-contraction semigroup on $\L^p(\R^d)$, for
		$p\in[2,\infty)$. Then, there exists an $\L^p$-valued c\`{a}dl\`{a}g
		modification to the stochastic convolution 
		\begin{align}
		\mathbb{U}(t)=\displaystyle{\int_0^t}e^{-(t-s)\A}\Phi\d\mathrm{W}(s)+\displaystyle{\int_0^t\int_{\Z}}e^{-(t-s)\A}\gamma(s-,z)\wi\pi(\d
		s,\d z),\end{align}
		such that for every stopping time $\tau> 0$, for every $T>0$
		and $p\in[2,\infty)$, we have
		\begin{align}\label{mmm}
		\mathbb{E}\left[\sup_{0\leq t\leq
			T\wedge\tau}\|\mathbb{U}(t)\|_{p}^{2}\right]\leq
		C\left(\|\Phi\|^2_{\gamma(\H,\J_p)}T+\int_0^T\int_{\Z}\|\gamma(s,z)\|_{p}^2\lambda(\d
		z)\d s\right),
		\end{align}
		where $C>0$ is a constant independent of $\mathbb{U}(\cdot)$.
	\end{theorem}
	\begin{proof}
		See Theorem 1.2, \cite{NJZ} and Theorem 1.1, \cite{BHZ}.
	\end{proof}
\fi 
	Let us now consider the \emph{stochastic Stokes equation:}
	\begin{equation}\label{se1}
	\left.
	\begin{aligned}
	\d\w(t)+\A\w(t)\d t&=\Phi\d
	\mathrm{W}(t)+\int_{\Z}\gamma(t-,z)\wi\pi(\d t,\d z),\\
	\w(0)&=\mathbf{0}.
	\end{aligned}
	\right\}
	\end{equation}
Making use of  Theorem 3.6, \cite{JZZB},	the unique solution of the problem (\ref{se1}) with paths in
	$\mathrm{L}^{\infty}(0,T;\J_p)$, $p\in[2,\infty),$ $\mathbb{P}$-a.s., can be
	represented by the stochastic convolution
	\begin{align}\label{se3}
	\w(t)=\int_0^te^{-(t-s)\A}\Phi\d\mathrm{W}(s)+\int_0^t\int_{\Z}e^{-(t-s)\A}\gamma(s-,z)\wi\pi(\d
	s,\d z),
	\end{align}
	for all $t\in[0,T]$, and   (\ref{se3}) has a
	c\`{a}dl\`{a}g modification such that
	\begin{align}\label{se4}
	\mathbb{E}\left[\sup_{0\leq t\leq T}\|\w(t)\|_p^{2}\right]\leq
	C\left(\|\Phi\|^2_{\gamma(\H,\J_p)}T+\int_0^T\int_{\Z}\|\gamma(t,z)\|_p^2\lambda(\d
	z)\d t\right),
	\end{align}
	and $\sup\limits_{0\leq t\leq T}\|\w(t)\|_p<\infty$, $\mathbb{P}$-a.s.
\subsection{The nonlinear problem} Let us now establish the existence of a local mild solution to the stochastic CBF system \eqref{5}. 
\begin{definition}\label{def3.1}
	A $\J_p$-valued  and $\mathscr{F}_{t}$-adapted stochastic process $\u:[0,T]\times \R^d\times \Omega
	\rightarrow \mathbb{R}$ with $\mathbb{P}$-a.s. c\`{a}dl\`{a}g trajectories for $t\in [0,T]$,  is a \emph{mild solution} to the system (\ref{5}), if for any $T>0$, $\u(t):=\u(t,\cdot,\cdot)$ satisfies the integral equation \eqref{6}
	$\mathbb{P}$-a.s., for each $t\in [0,T].$
\end{definition}
Let us set $\v=\u-\w$. Then, $\v (\cdot)$ satisfies the following system $\mathbb{P}$-a.s.:
 \begin{equation}\label{3.4}
\left\{
\begin{aligned}
\frac{\d\v(t)}{\d t}+[\A\v(t)+\B(\v(t)+\w(t))+\mathcal{C}(\v(t)+\w(t))]&=\mathbf{0}, \\
\u(0)&=\x.
\end{aligned}\right. 
\end{equation}
Note that for each fixed $\omega\in\Omega$, \eqref{3.4} is a deterministic system. The operator system \eqref{3.4}  can be transformed  into an nonlinear integral equation as
\begin{align}\label{3.5}
\v(t)=e^{-t\A}\x-\int_0^te^{-(t-s)\A}[\B(\v(s)+\w(s))+\mathcal{C}(\v(s)+\w(s))]\d s,
\end{align}
for all $t\in[0,T]$.   As in the case of deterministic CBF equations, we obtain the existence of a unique local mild solution to the system \eqref{3.4} by using the contraction mapping principle in the space $\C([0,\wi T];\J_p)$, $\mathbb{P}$-a.s., for $\max\left\{d,\frac{d(r-1)}{2}\right\}<p<\infty$, where $0<\wi T<T$ is a random time. Let us set 
\begin{align}
\Sigma(M,\wi T)=\left\{\v\in\C([0,\wi T];\J_p):\|\v(t)\|_{p}\leq M, \ \mathbb{P}\text{-a.s.,} \ \text{ for all }\ t\in[0,\wi T]\right\}. 
\end{align}
Clearly the space $\Sigma(M,\wi T)$ equipped with supremum topology is a complete metric space. 
\begin{theorem}\label{thm3.2}
For $\max\left\{d,\frac{d(r-1)}{2}\right\}<p<\infty$, let the $\mathscr{F}_0$-measurable initial data $\x\in\J_p$, $\mathbb{P}$-a.s. be given. For $M>\|\x\|_{p}$, there exists a random time $\wi T$ such that \eqref{3.4} has a unique mild solution in $\Sigma(M,\wi T)$.  
\end{theorem}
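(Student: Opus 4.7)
The plan is to carry out the fixed-point argument pathwise on $\v = \u - \w$, which by \eqref{3.5} satisfies a Volterra-type integral equation that is deterministic once the stochastic convolution $\w$ is frozen as a random parameter. From \eqref{se4} and the c\`adl\`ag modification property, the random variable $N(\omega) := \sup_{t \in [0,T]} \|\w(t,\omega)\|_p$ is finite $\mathbb{P}$-a.s. and $\mathscr{F}_T$-measurable, and for every $\omega$ in the full-measure set $\{N < \infty\}$ I would define the map $\G_\omega : \Sigma(M,\wi T) \to \C([0,\wi T];\J_p)$ by
\begin{equation*}
\G_\omega(\v)(t) = e^{-t\A}\x - \int_0^t e^{-(t-s)\A}\bigl[\B(\v(s) + \w(s,\omega)) + \mathcal{C}(\v(s) + \w(s,\omega))\bigr]\d s,
\end{equation*}
and look for a pathwise fixed point. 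The structural proof then parallels the contraction step of Theorem \ref{th2.1}, the only new feature being the random shift by $\w$.

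For the self-mapping property, I would expand by bilinearity $\B(\v+\w) = \B(\v) + \B(\v,\w) + \B(\w,\v) + \B(\w)$ and apply \eqref{1.7} term by term, together with the crude bound $\|\v(s)+\w(s,\omega)\|_p \leq M + N(\omega)$ inside \eqref{1.8}. This yields
\begin{equation*}
\sup_{t \in [0,\wi T]} \|\G_\omega(\v)(t)\|_p \leq \|\x\|_p + C\wi T^{\frac{1}{2}-\frac{d}{2p}}(M+N)^2 + C\wi T^{1-\frac{d(r-1)}{2p}}(M+N)^r.
\end{equation*}
For the contraction, given $\v_1,\v_2 \in \Sigma(M,\wi T)$ with difference $\z = \v_1 - \v_2$, using $\B(\v_1+\w) - \B(\v_2+\w) = \B(\z,\v_1+\w) + \B(\v_2+\w,\z)$ and Taylor's formula for $\mathcal{C}$ via \eqref{29}, together with \eqref{1.7} and \eqref{1.9}, gives
\begin{equation*}
\sup_{t \in [0,\wi T]} \|\G_\omega(\v_1)(t) - \G_\omega(\v_2)(t)\|_p \leq C\bigl((M+N)\wi T^{\frac{1}{2}-\frac{d}{2p}} + (M+N)^{r-1}\wi T^{1-\frac{d(r-1)}{2p}}\bigr)\sup_{t \in [0,\wi T]}\|\z(t)\|_p.
\end{equation*}
Under the hypothesis $\max\{d, d(r-1)/2\} < p < \infty$, the exponents $\frac{1}{2}-\frac{d}{2p}$ and $1-\frac{d(r-1)}{2p}$ are strictly positive, so both right-hand sides can be driven, respectively, below $M$ and below $\|\z\|$ with prefactor $< 1$ by shrinking $\wi T$ depending on $M$, $\|\x\|_p$ and $N(\omega)$.

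The main obstacle is the measurable selection of the random time. I would set
\begin{equation*}
\wi T(\omega) := T \wedge \inf\bigl\{t > 0 : \Psi(t, N(\omega)) \geq \delta\bigr\},
\end{equation*}
where $\Psi(t,N) := C t^{\frac{1}{2}-\frac{d}{2p}}[(M+N)^2 + (M+N)] + C t^{1-\frac{d(r-1)}{2p}}[(M+N)^r + (M+N)^{r-1}]$ collects both the invariance and contraction thresholds, and $\delta := \min(M - \|\x\|_p, \tfrac{1}{2})$. Because $\Psi(\cdot, N)$ is continuous, strictly increasing, and vanishes at $t = 0$, and because $M > \|\x\|_p$ and $N(\omega) < \infty$ a.s., we get $\wi T(\omega) > 0$ $\mathbb{P}$-a.s., and $\wi T$ inherits $\mathscr{F}_T$-measurability from $N$. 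The Banach fixed-point theorem applied pathwise to $\G_\omega$ on the complete metric space $\Sigma(M, \wi T)$ then produces the unique fixed point $\v$, and $\u := \v + \w$ is the sought mild solution; continuity of $\v$ together with the c\`adl\`ag property of $\w$ deliver the c\`adl\`ag paths demanded in Definition \ref{def3.1}, while adaptedness is preserved because $\G_\omega$ depends on $\w$ only up to time $t$.
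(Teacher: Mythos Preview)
Your proposal is correct and follows essentially the same contraction-mapping route as the paper: freeze $\w$ pathwise, show the nonlinear map leaves $\Sigma(M,\wi T)$ invariant via \eqref{1.7}--\eqref{1.8}, obtain the Lipschitz estimate via bilinearity of $\B$ and Taylor's formula for $\mathcal{C}$ together with \eqref{1.9}, and then shrink $\wi T$ (depending on $M$, $\|\x\|_p$ and $\mu_p=N$) so that both conditions hold. In fact you are more careful than the paper on one point --- you give an explicit, measurable construction of the random time $\wi T(\omega)$ and note adaptedness of the fixed point, whereas the paper simply asserts that such a $\wi T$ can be chosen.
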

\begin{proof}
	Let us  take any $\v\in \Sigma(M,\wi T)$ and define $\y(t)=\F(\v)(t)$ by 
	\begin{align}
	\y(t)=e^{-t\A}\x-\int_0^te^{-(t-s)\A}[\B(\v(s)+\w(s))+\mathcal{C}(\v(s)+\w(s))]\d s,
	\end{align}
for all $t\in[0,\wi T]$. Let us first establish that $\G:\Sigma(M,\wi T)\to\Sigma(M,\wi T).$ Making use of the estimates \eqref{1.6}-\eqref{1.9}, we find 
	\begin{align}
	\|\y(t)\|_{p}&\leq\|e^{-t\A}\x\|_{p}+\int_0^t\|e^{-(t-s)\A}[\B(\v(s)+\w(s))+\mathcal{C}(\v(s)+\w(s))]\|_{p}\d s\nonumber\\&\leq \|\x\|_{p}+C\int_0^t(t-s)^{-\left(\frac{1}{2}+\frac{d}{2p}\right)}\|\v(s)+\w(s)\|_{p}^2\d s\nonumber\\&\quad+C\int_0^t(t-s)^{-\frac{d(r-1)}{2p}}\|\v(s)+\w(s)\|_{p}^r\d s \nonumber\\&\leq  \|\x\|_{p}+Ct^{\frac{1}{2}-\frac{d}{2p}}\sup_{s\in[0,t]}\|\v(s)+\w(s)\|_{p}^2+Ct^{1-\frac{d(r-1)}{2p}}\sup_{s\in[0,t]}\|\v(s)+\w(s)\|_{p}^r\nonumber\\&\leq \|\x\|_{p}+C{\wi T}^{\frac{1}{2}-\frac{d}{2p}}\left(\sup_{t\in[0,\wi T]}\|\v(t)\|_{p}^2+\sup_{t\in[0,\wi T]}\|\w(t)\|_{p}^2\right)\nonumber\\&\quad+C{\wi T}^{1-\frac{d(r-1)}{2p}}\left(\sup_{t\in[0,\wi T]}\|\v(t)\|_{p}^r+\sup_{t\in[0,\wi T]}\|\w(t)\|_{p}^r\right)\nonumber\\&\leq \|\x\|_{p}+C{\wi T}^{\frac{1}{2}-\frac{d}{2p}}(M^2+\mu_p^2)+C{\wi T}^{1-\frac{d(r-1)}{2p}}(M^r+\mu_p^r),
	\end{align}
$\mathbb{P}$-a.s.,	for all $t\in[0,\wi T]$, where $$\mu_p=\sup_{t\in[0,T]}\|\w(t)\|_{p}.$$ Now, since $M>\|\x\|_{p}$, $\mathbb{P}$-a.s., and $\max\left\{d,\frac{d(r-1)}{2}\right\}<p<\infty$, one can choose $0<\wi T<T$ in such a way that $\|\y(t)\|_{p}\leq M$, for all $t\in[0,\wi T]$, provided 
	\begin{align*}
	\|\x\|_{p}+C{\wi T}^{\frac{1}{2}-\frac{d}{2p}}(M^2+\mu_p^2)+C{\wi T}^{1-\frac{d(r-1)}{2p}}(M^r+\mu_p^r)\leq M. 
	\end{align*}
	Therefore $\y\in\Sigma(M,\wi T)$. 
	
	Our next aim is to show that $\F:\Sigma(M,\wi T)\to\Sigma(M,\wi T)$ is a contraction. Let us consider $\v_1,\v_2\in\Sigma(M,\wi T)$ and set $\y_i(t)=\G(\v_i)(t)$, for all $t\in[0,\wi T]$ and $i\in\{1,2\}$ and $\y=\y_1-\y_2$. Then $\y(\cdot)$ satisfies 
	\begin{align*}
	\y(t)&=-\int_0^te^{-(t-s)\A}[\B(\v_1(s)+\w(s))-\B(\v_2+\w(s))+\mathcal{C}(\v_1(s)+\w(s))-\mathcal{C}(\v_2(s)+\w(s))]\d s,
	\end{align*}
$\mathbb{P}$-a.s.,	for all $t\in[0,\wi T]$. Once again using the bilinearity of $\B(\cdot)$ and Taylor's formula, we find 
	\begin{align}
	\|\y(t)\|_{p}&\leq \int_0^t\|e^{-(t-s)\A}\B(\v_1(s)-\v_2(s),\v_1(s)+\w(s))\|_{p}\d s\nonumber\\&\quad+ \int_0^t\|e^{-(t-s)\A}\B(\v_2(s)+\w(s),\v_1(s)-\v_2(s))\|_{p}\d s\nonumber\\&\quad+\int_0^t\left\|e^{-(t-s)\A}\int_0^1\mathcal{C}'(\theta\v_1(s)+(1-\theta)\v_2(s)+\w(s))(\v_1(s)-\v_2(s))\d\theta\right\|_{p}\d s\nonumber\\&\leq C\int_0^t(t-s)^{-\left(\frac{1}{2}+\frac{d}{2p}\right)}\|\v_1(s)+\w(s)\|_{p}\|\v_1(s)-\v_2(s)\|_{p}\d s\nonumber\\&\quad+C\int_0^t(t-s)^{-\left(\frac{1}{2}+\frac{d}{2p}\right)}\|\v_2(s)+\w(s)\|_{p}\|\v_1(s)-\v_2(s)\|_{p}\d s\nonumber\\&\quad+C\int_0^t(t-s)^{-\frac{d(r-1)}{2p}}\left(\|\v_1(s)\|_{p}+\|\v_2(s)\|_{p}+\|\w(s)\|_{p}\right)^{r-1}\|\v_1(s)-\v_2(s)\|_{p}\d s \nonumber\\&\leq C{t}^{\frac{1}{2}-\frac{d}{2p}}\sup_{s\in[0,t]}\left(\|\v_1(s)\|_{p}+\|\v_2(s)\|_{p}+\|\w(s)\|_{p}\right)\sup_{s\in[0,t]}\|\y(s)\|_{p}\nonumber\\&\quad+Ct^{1-\frac{d(r-1)}{2p}}\sup_{s\in[0,t]}\left(\|\v_1(s)\|_{p}^{r-1}+\|\v_2(s)\|_{p}^{r-1}+\|\w(s)\|_{p}^{r-1}\right)\sup_{s\in[0,t]}\|\y(s)\|_{p}\nonumber\\&\leq C\left({\wi T}^{\frac{1}{2}-\frac{d}{2p}}(M+\mu_p)+{\wi T}^{1-\frac{d(r-1)}{2p}}(M^{r-1}+\mu_p^{r-1})\right)\sup_{t\in[0,\wi T]}\|\y(t)\|_{p},
	\end{align}
	for all $t\in[0,\wi T]$. For $\max\left\{d,\frac{d(r-1)}{2}\right\}<p<\infty$, one can choose $0<\wi T<T$ in such a way that $$C\left({\wi T}^{\frac{1}{2}-\frac{d}{2p}}(M+\mu_p)+{\wi T}^{1-\frac{d(r-1)}{2p}}(M^{r-1}+\mu_p^{r-1})\right)<1.$$ Hence, $\F$ is a strict contraction in $\Sigma(M,\wi T)$ and an application of the contraction mapping principle provides the existence of mild solution to the problem \eqref{3.4} up to a random time $0<\wi T<T$. Uniqueness follows form the representation \eqref{3.5}. 
\end{proof}
Since $\u=\v+\w$, we immediately obtain the following Theorem on the existence of mild solution to the system  \eqref{5}.
\begin{theorem}\label{thm3.3}
	For $\max\left\{d,\frac{d(r-1)}{2}\right\}<p<\infty$, let the $\mathscr{F}_0$-measurable initial data $\x\in\J_p$, $\mathbb{P}$-a.s. be given. Then there exists a random time $0<\wi T<T$ such that \eqref{5} has a unique mild solution $\u\in\mathrm{L}^{\infty}(0,\wi T;\J_p)$, $\mathbb{P}$-a.s. with a c\`{a}dl\`{a}g modification.
\end{theorem}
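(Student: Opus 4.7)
The plan is to exploit the decomposition $\u=\v+\w$ that was used to reduce the stochastic problem \eqref{5} to the random PDE \eqref{3.4}. Given the $\mathscr{F}_0$-measurable initial datum $\x$, the stochastic convolution $\w(\cdot)$ defined by \eqref{se3} is $\mathscr{F}_t$-adapted, admits a c\`adl\`ag modification in $\J_p$, and satisfies $\mu_p=\sup_{t\in[0,T]}\|\w(t)\|_p<\infty$, $\mathbb{P}$-a.s., by the results recalled from \cite{JZZB} together with the estimate \eqref{se4}. Choosing any $M>\|\x\|_p$ (for instance $M=\|\x\|_p+1$), Theorem \ref{thm3.2} then yields, on a full-probability set, a random time $0<\wi T<T$ and a unique $\v\in\Sigma(M,\wi T)\subset\C([0,\wi T];\J_p)$ solving \eqref{3.5} pathwise.

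I would then \emph{define} $\u:=\v+\w$ on $[0,\wi T]$. Existence of a mild solution in the sense of Definition \ref{def3.1} is immediate: adding the integral representation \eqref{3.5} for $\v$ to the stochastic convolution representation \eqref{se3} for $\w$, and using that $\w(t)=e^{-t\A}\cdot\mathbf{0}+\int_0^te^{-(t-s)\A}\Phi\d\W(s)+\int_0^t\int_{\Z}e^{-(t-s)\A}\gamma(s-,z)\wi\pi(\d s,\d z)$, one recovers precisely equation \eqref{6} with nonlinearities evaluated at $\u=\v+\w$. Adaptedness of $\u$ follows from the adaptedness of both $\v$ (which is pathwise constructed from the $\mathscr{F}_0$-measurable datum $\x$ and the adapted driver $\w$) and $\w$. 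The c\`adl\`ag modification of $\u$ is obtained by adding the c\`adl\`ag modification of $\w$ to the continuous path $\v$, and the $\mathrm{L}^\infty(0,\wi T;\J_p)$ bound, $\mathbb{P}$-a.s., follows from $\sup_{t\in[0,\wi T]}\|\u(t)\|_p\leq M+\mu_p<\infty$.

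For uniqueness, suppose $\u_1,\u_2$ are two mild solutions of \eqref{5} on $[0,\wi T]$ in the class $\mathrm{L}^{\infty}(0,\wi T;\J_p)$. Setting $\v_i:=\u_i-\w$, each $\v_i$ belongs to $\C([0,\wi T];\J_p)$ (absorbing the only jumps into $\w$) and satisfies \eqref{3.5}; by possibly shrinking the random time so that both $\v_i$ lie in $\Sigma(M,\wi T)$, the uniqueness part of Theorem \ref{thm3.2} gives $\v_1\equiv\v_2$, whence $\u_1\equiv\u_2$ on $[0,\wi T]$. The main (and essentially only) subtlety I anticipate is ensuring that the random time $\wi T$ produced by the pathwise contraction argument of Theorem \ref{thm3.2} is chosen measurably in $\omega$; this is handled automatically because the smallness condition $C\bigl({\wi T}^{\frac{1}{2}-\frac{d}{2p}}(M+\mu_p)+{\wi T}^{1-\frac{d(r-1)}{2p}}(M^{r-1}+\mu_p^{r-1})\bigr)<1$ together with $\|\x\|_p+C{\wi T}^{\frac{1}{2}-\frac{d}{2p}}(M^2+\mu_p^2)+C{\wi T}^{1-\frac{d(r-1)}{2p}}(M^r+\mu_p^r)\leq M$ depends only on the $\mathscr{F}_T$-measurable quantities $\|\x\|_p$ and $\mu_p$, so $\wi T$ can be defined as an explicit measurable function of these, ensuring the construction yields a bona fide random time.
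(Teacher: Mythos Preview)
Your proposal is correct and follows exactly the paper's approach: the paper deduces Theorem~\ref{thm3.3} in a single sentence from the decomposition $\u=\v+\w$ together with Theorem~\ref{thm3.2} and the c\`adl\`ag regularity of $\w$ from \eqref{se3}--\eqref{se4}. You have simply fleshed out the details (adaptedness, c\`adl\`ag modification via continuous $\v$ plus c\`adl\`ag $\w$, the $\mathrm{L}^{\infty}$ bound, and measurability of $\wi T$) that the paper leaves implicit.
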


	\section{Stochastic CBF equations subjected to fraction Brownian motion}\label{sec4}\setcounter{equation}{0} In this section, we obtain the existence and uniqueness of a local mild solution up to a random time for the stochastic CBF equations \eqref{1.13}, for $d=2,3$.  
	\subsection{Fractional Brownian motion}
	The first study on fractional Brownian motion (fBm) within the Hilbertian framework is reported in \cite{ANK}. Due to various practical applications, the stochastic analysis of fBm has been intensively developed starting from the nineties.  For a comprehensive study, the interested readers are referred to see \cite{VPMT,DNu}, etc. In this subsection, we provide a brief description of fBm and its stochastic integral representation in separable Hilbert spaces (cf. sections 4 and 5, \cite{EIMR} for separable Banach spaces). Let us consider a time interval $[0,T]$, where $T$ is an arbitrary fixed time horizon. 
	\begin{definition}
	A fractional Brownian motion (fBm) with Hurst parameter $H\in(0,1)$ is a centered Gaussian process $\W^H$ with covariance $$R_{H}(t,s):= \E\left[\W^{H}(t)\W^{H}(s)\right]=\frac{1}{2}\left(t^{2H}+s^{2H}-|t-s|^{2H}\right),$$ where $s,t\in[0,T]$. 
		\end{definition}Note that if $H=\frac{1}{2}$, then $\W^{\frac{1}{2}}$ is the standard Brownian motion. It should be recalled that fBm is not a Markov process except in the case $H=\frac{1}{2}$. The fBm is the only $H$-self-similar Gaussian process (that is, for
	any constant $a > 0$, the processes $\{a^{-H}\W^{H}(at)\}_{0\leq t\leq T}$ and $\W^{H}=\{\W^{H}(t)\}_{0\leq t\leq T}$ have
the same distribution) with stationary increments (Proposition 1.1, \cite{CAT}) $$\E\left[(\W^{H}(t)-\W^{H}(s))^2\right]=|t-s|^{2H}.$$ Furthermore, the process $\W^{H}$ admits the  Wiener integral representation of the form
	\begin{align}\label{41}\W^{H}(t)=\int_0^tK_H(t,s)\d\W(s),\end{align} where $\W =\{\W(t)\}_{0\leq t\leq T}$ is a Wiener process, and $K_H(\cdot,\cdot)$ is the kernel given by $$K_{H}(t,s)=d_{H}(t-s)^{H-\frac{1}{2}}+s^{H-\frac{1}{2}}\F\left(\frac{t}{s}\right),$$ where $d_H$ is a constant  and $$\F(z)=d_H\left(\frac{1}{2}-H\right)\int_0^{z-1}\theta^{H-\frac{3}{2}}\left(1-(\theta+1)^{H-\frac{1}{2}}\right)\d\theta.$$ For $H>\frac{1}{2}$, the kernel $K_H(\cdot,\cdot)$ has the simpler expression $$K_H(t,s)=c_Hs^{\frac{1}{2}-H}\int_s^t(u-s)^{H-\frac{3}{2}}u^{H-\frac{1}{2}}\d u,$$ where $t>s$ and $c_H=\left(\frac{H(H-1)}{\beta(2-2H,H-\frac{1}{2})}\right)^{\frac{1}{2}},$ $\beta(\cdot,\cdot)$ being the beta function. The fact that the process defined by \eqref{41} is a fBm follows from the equality $$\int_0^{t\wedge s}K_H(t,u)K_H(s,u)\d u=R_H(t,s).$$ Moreover, the kernel $K_H(\cdot,\cdot)$ satisfies the condition $$\frac{\partial}{\partial t}K_H(t,s)=d_H\left(H-\frac{1}{2}\right)\left(\frac{s}{t}\right)^{\frac{1}{2}-H}(t-s)^{H-\frac{3}{2}}.$$ Note that  the fBm is an $\alpha$-regular Volterra process  for $\alpha=H-\frac{1}{2}$, where $H>\frac{1}{2}$ (see Remark 2.2, \cite{PCBM} for more details).  
	
	 Let $\mathbb{U}$ be  a  separable  Hilbert  space  with  scalar  product $(\cdot,\cdot)$.  Let $\mathcal{E}_{H}$ denote the linear space of $\mathbb{U}$-valued step functions on $[0,T]$ of the form \begin{align}\label{42}\varphi(t)=\sum_{i=0}^{m-1}x_i\mathds{1}_{[t_{i},t_{i+1})}(t),\end{align} where $0=t_0,t_1,t_2,\ldots,t_m\in[0,T]$, $m\in\mathbb{N}$, $x_i\in\mathbb{U}$. The space $\mathcal{E}_{H}$ is equipped with the inner product $$\left(\sum_{i=0}^{m-1}x_i\mathds{1}_{[0,t_i)},\sum_{j=0}^{n-1}y_j\mathds{1}_{[0,s_j)}\right)_{\mathcal{H}}=\sum_{i=0}^{m-1}\sum_{j=0}^{n-1}(x_i,y_j)R_{H}(t_i,s_j).$$ Note that $\mathcal{E}_H$ is a pre-Hilbert space and we denote the completion of $\mathcal{E}_H$  with respect to $(\cdot,\cdot)_{\mathcal{H}}$ by $\mathcal{H}$.  For $\varphi\in\mathcal{E}_{H}$ of the form \eqref{42}, let us define its Wiener integral with respect to the fBm as $$\int_0^T\varphi(s)\d\W^{H}(s)=\sum_{i=0}^{m-1}x_i(\W^{H}(t_{i+1})-\W^{H}(t_i)).$$ It is clear that the mapping $\varphi=\sum_{i=1}^mx_i\mathds{1}_{(t_{i},t_{i+1}]}\mapsto\int_0^T\varphi(s)\d\W^{H}(s)$ is an isometry between $\mathcal{E}_{H}$ and the linear space $\mathrm{span}\{\W^{H}(t):t\in[0,T]\}$ viewed as a subspace of $\mathrm{L}^2(\Omega;\mathbb{U})$, since $$\E\left[\left\|\int_0^T\varphi(s)\d\W^{H}(s)\right\|_{\mathbb{U}}^2\right]=\|\varphi\|_{\mathcal{H}}^2.$$  The image of an element $\varphi\in\mathcal{H}$ under this isometry is called the Wiener integral of $\varphi$ with respect to the fBm $\W^H$. For $0<s<T$, we consider the operator $K^*:\mathcal{E}_H\to\mathrm{L}^2(0,T;\mathbb{U})$ as $$(K_T^*\varphi)(s)=K(T,s)\varphi(s)+\int_s^T(\varphi(r)-\varphi(s))\frac{\partial K}{\partial r}(r,s)\d r.$$ For $H>\frac{1}{2}$, the operator $K^*$ has the simpler expression 
	$$(K_T^*\varphi)(s)=\int_s^T\varphi(r)\frac{\partial K}{\partial r}(r,s)\d r.$$ The integrals appearing on the right-hand side are both Bochner integrals. Since the operator $K^*$ satisfies $(K^*\varphi,K^*\psi)_{\mathrm{L}^2(0,T;\mathbb{U})}=(\varphi,\psi)_{\mathcal{H}},\ \text{ for all }\ \varphi,\psi\in\mathcal{E}_H,$  $K^*$ can be extended to an isometry between $\mathcal{H}$ and $\mathrm{L}^2(0,T;\mathbb{U})$ in the sense that $$\E\left[\left\|\int_0^T\varphi(s)\d\W^{H}(s)\right\|_{\mathbb{U}}^2\right]=\|K^*\varphi\|_{\mathrm{L}^2(0,T;\mathbb{U})}^2=\|\varphi\|_{\mathcal{H}}^2,\ \text{ for all }\ \varphi\in\mathcal{H}.$$  Hence we have the following connection with the Wiener process $\W$  \begin{align}\label{43}\int_0^t\varphi(s)\d\W^{H}(s)=\int_0^t(K_t^*\varphi)(s)\d\W( s),\end{align} for every $t\in[0,T]$, and $\varphi\mathds{1}_{[0,t]}\in\mathcal{H}$ if and only if $K^*\varphi\in\mathrm{L}^2(0,T;\mathbb{U})$. Furthermore, if $\varphi,\psi\in\mathcal{H}$ are such that $\int_0^T\int_0^T|\varphi(t)||\psi(t)||t-s|^{2H-2}\d s\d t<\infty,$ then their scalar product in $\mathcal{H}$ is given by \begin{align*}(\varphi,\psi)_{\mathcal{H}}=\int_0^T\int_0^T\varphi(t)\psi(t)|t-s|^{2H-2}\d s\d t.\end{align*}
	In general, careful justification is needed for   the existence of right hand side of \eqref{43} (cf. section 5.1, \cite{DNu}). As we are discussing the case of Wiener integrals over the Hilbert space $\mathbb{U}$, we point out that if  $\varphi\in\mathrm{L}^2(0,T;\mathbb{U})$ is a deterministic function, then the relation \eqref{43} holds, and the right hand is well defined in $\mathrm{L}^2(\Omega;\mathbb{U})$ if $K_H^*\varphi\in\mathrm{L}^2(0,T;\mathbb{U})$.
	\subsection{Cylindrical Brownian motion}  For a Hilbert space $\mathbb{U}$, let us now define  the standard cylindrical fractional Brownian motion in $\mathbb{U}$  as the formal series (cf. \cite{TED,STCAT})
	\begin{align}\label{44}
	\W^{H}(t)=\sum_{n=0}^{\infty}e_n\W_n^{H}(t), 
	\end{align}
	where $\{e_n\}_{n=1}^{\infty}$ is a complete orthonormal basis in $\mathbb{U}$ and $\W_n^{H}$ is an one dimensional fBm. It should be noted that the series \eqref{44} does not converge in $\mathrm{L}^2(\Omega;\mathbb{U}),$  and thus $\W^{H}(t)$ is not a well-defined $\mathbb{U}$-valued random variable. But, one can consider a Hilbert space $\mathbb{U}_1$ such that $\mathbb{U}\subset\mathbb{U}_1,$ the linear embedding is a Hilbert-Schmidt operator, therefore, the series \eqref{44} defines a $\mathbb{U}_1$-valued Gaussian random variable and $\{\W^{H}(t)\}_{t\in[0,T]}$ is a $\mathbb{U}_1$-valued cylindrical fBm.
	
Let $\mathbb{Y}$ be an another real and separable Hilbert space and $\mathcal{L}_2(\mathbb{U},\mathbb{Y})$ denote the space of Hilbert-Schmidt operators from $\mathbb{U}$ to $\mathbb{Y}$.  As discussed in \cite{DaZ}, it is possible to define a stochastic integral of the form: 
	\begin{align}\label{45}
	\int_0^T\varphi(t)\d\W^{H}(t),
	\end{align}
	where $\varphi:[0,T]\mapsto\mathcal{L}(\mathbb{U},\mathbb{Y})$, and the integral \eqref{45} is a $\mathbb{Y}$-valued random variable, which is independent of choice of $\mathbb{U}_1$. Let $\varphi$ be a deterministic function with values in $\mathcal{L}_2(\mathbb{U},\mathbb{Y})$ satisfying: 
	\begin{enumerate}
		\item [(i)] for each $x\in\mathbb{U}$, $\varphi(\cdot)x\in\mathrm{L}^p(0,T;\mathbb{Y})$, for $p>\frac{1}{H}$,
		\item [(ii)] $\int_0^T\int_0^T\|\varphi(s)\|_{\mathcal{L}_2(\mathbb{U},\mathbb{Y})}\|\varphi(t)\|_{\mathcal{L}_2(\mathbb{U},\mathbb{Y})}|s-t|^{2H-2}\d s\d t<\infty.$
	\end{enumerate}
Then the stochastic integral \eqref{45} can be expressed as 
\begin{align}\label{46}
	\int_0^T\varphi(t)\d\W^{H}(t):=\sum_{n=1}^{\infty}\int_0^t\varphi(s)e_n\d\W_n^{H}(s)=\sum_{n=1}^{\infty}\int_0^t(K_H^*\varphi e_n)\d\W_n(s),
\end{align}
where $\W_n$ is the standard Brownian motion connected to fBm $\W_n^{H}$ by the representation formula \eqref{41}. Since $H\in(\frac{1}{2},1)$ implies $\varphi e_n\in\mathrm{L}^2(0,T;\mathbb{Y})$, for each $n\in\mathbb{N}$,  so that the terms of the series \eqref{46} are well-defined. Moreover, the sequence of random variables $\left\{\int_0^t\varphi(s)e_n\d\W_n^{H}(s)\right\}_{n=1}^{\infty}$ are mutually independent Gaussian random variables (cf. \cite{TED}).

	For cylindrical Brownian motions  in a separable Banach space $\mathbb{Y}$, the interested readers are referred to see sections 4 and 5, \cite{EIMR}. For stochastic integrals in $\mathbb{Y}$, a series expansion  similar to \eqref{46} is available, where the Hilbert-Schmidt operators from $\mathbb{U}$ to $\mathbb{Y}$  are replaced by $\gamma$-radonifying operators from  $\mathbb{U}$ to $\mathbb{Y}$ (see \cite{EIMR} for more details). One can refer \cite{JCOC,MMEM}, etc for the local solvability in $\mathbb{L}^p$-spaces for  some mathematical models like semilinear heat equation, Hardy-H\'enon parabolic equations, etc perturbed by fBm. 
	
	\subsection{SCBF equations perturbed by fractional Brownian motion} 	
	We consider $\mathbb{U}=\mathbb{H}=\J_2$, $\{e_j\}_{j=1}^{\infty}$ as the complete orthonormal basis of $\J_2$, and  we take $d=2,3$. Next, we consider the following  \emph{stochastic Stokes equation} perturbed by fractional Brownian noise as 
	\begin{equation}\label{47}
	\left.
	\begin{aligned}
	\d\w(t)+\A\w(t)\d t&=\Phi\d\W^{H}(t),\\
	\w(0)&=\mathbf{0},
	\end{aligned}
	\right\}
	\end{equation}
where $\Phi\in\mathcal{L}(\H,\J_p)$ and $\W^{H}=\{\W^{H}(t)\}_{t\in[0,T]}$ is a cylindrical fractional Brownian process. Since the operator $\A$ generates an analytic semigroup on $\J_p$, by standard estimates on Green's function, we obtain (cf. \cite{PCBM}) \begin{align}\label{4.7}\|\S(t)\Phi\|_{\gamma(\H,\J_p)}\leq Ct^{-\frac{d}{4}},\ \text{ for }\ t>0.\end{align} 	Using Corollary 4.1, \cite{PCBM} (see Remark 4.2 and Section 5.2 also), under the assumption $$\max\left\{\frac{1}{2},\frac{d}{4}\right\}<H<1,$$	the unique solution of the problem (\ref{se1}) with paths in
	$\mathrm{C}([0,T];\J_p)$, $p\in[2,\infty),$ $\mathbb{P}$-a.s., can be
	represented by the stochastic convolution
	\begin{align}\label{48}
	\w(t)=\int_0^te^{-(t-s)\A}\Phi\d\mathrm{W}^{H}(s),
	\end{align}
	for all $t\in[0,T]$ has a modification such that
	\begin{align}\label{49}
\sup_{t\in[0,T]}\left\|\int_0^te^{-(t-s)\A}\d\mathrm{W}^{H}(s)\right\|_{p}<\infty, \ \mathbb{P}\text{-a.s.}
	\end{align}
	Then the following theorem can be established in a similar way as that of Theorems \ref{thm3.2} and \ref{thm3.3}. 
	
	\begin{theorem}\label{thm3.4}
		For $\max\left\{d,\frac{d(r-1)}{2}\right\}<p<\infty$ and $\max\left\{\frac{1}{2},\frac{d}{4}\right\}<H<1,$ $d=2,3,$ let the $\mathscr{F}_0$-measurable initial data $\x\in\J_p$, $\mathbb{P}$-a.s. be given. Then there exists a random time $\widehat{T}$ such that \eqref{5} has a unique mild solution $\u\in\mathrm{C}([0,\widehat{T}];\J_p)$, $\mathbb{P}$-a.s. satisfying \eqref{1.14}. 
	\end{theorem}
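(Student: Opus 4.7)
The plan is to mimic the argument of Theorems \ref{thm3.2} and \ref{thm3.3}, with the stochastic convolution now coming from the fractional Brownian perturbation. First, I would invoke the regularity result recalled in \eqref{47}--\eqref{49}: under the assumptions $\Phi\in\mathcal{L}(\H,\J_p)$, the kernel bound \eqref{4.7}, and $\max\left\{\frac{1}{2},\frac{d}{4}\right\}<H<1$, the stochastic convolution
$$\w(t)=\int_0^te^{-(t-s)\A}\Phi\d\W^{H}(s),\qquad t\in[0,T],$$
admits a continuous $\J_p$-valued modification with $\mu_p:=\sup_{t\in[0,T]}\|\w(t)\|_{p}<\infty$ $\mathbb{P}$-a.s. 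I would then reduce \eqref{1.13} to a pathwise fixed-point problem by setting $\v=\u-\w$, so that $\v$ solves, $\mathbb{P}$-a.s., the deterministic integral equation
$$\v(t)=e^{-t\A}\x-\int_0^te^{-(t-s)\A}\bigl[\B(\v(s)+\w(s))+\mathcal{C}(\v(s)+\w(s))\bigr]\d s,\qquad t\in[0,\widehat T].$$

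Next, I would work with the complete metric space $\Sigma(M,\widehat T)=\{\v\in\C([0,\widehat T];\J_p):\|\v(t)\|_{p}\leq M\ \mathbb{P}\text{-a.s.}\}$ for $M>\|\x\|_{p}$, and define the map $\F(\v)(t)$ by the right-hand side above. The heart of the argument is already in place: applying the heat-semigroup smoothing estimates \eqref{1.7}--\eqref{1.9} exactly as in the proof of Theorem \ref{thm3.2}, I obtain
$$\|\F(\v)(t)\|_{p}\leq \|\x\|_{p}+C{\widehat T}^{\frac{1}{2}-\frac{d}{2p}}(M^2+\mu_p^2)+C{\widehat T}^{1-\frac{d(r-1)}{2p}}(M^r+\mu_p^r),$$
and for $\v_1,\v_2\in\Sigma(M,\widehat T)$, using bilinearity of $\B(\cdot)$ and Taylor's formula for $\mathcal{C}(\cdot)$,
$$\sup_{t\in[0,\widehat T]}\|\F(\v_1)(t)-\F(\v_2)(t)\|_{p}\leq C\Bigl({\widehat T}^{\frac{1}{2}-\frac{d}{2p}}(M+\mu_p)+{\widehat T}^{1-\frac{d(r-1)}{2p}}(M^{r-1}+\mu_p^{r-1})\Bigr)\sup_{t\in[0,\widehat T]}\|\v_1(t)-\v_2(t)\|_{p}.$$
Since $\max\{d,\frac{d(r-1)}{2}\}<p<\infty$, both exponents of $\widehat T$ are strictly positive, so for each $\omega$ I can choose $\widehat T=\widehat T(\omega)>0$ small enough that $\|\F(\v)(t)\|_p\leq M$ and the contraction constant is strictly less than one; a standard measurable-selection argument, or defining $\widehat T$ explicitly in terms of $\mu_p(\omega)$ and $\|\x(\omega)\|_p$, renders $\widehat T$ a genuine random time.

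Applying the Banach fixed-point theorem pathwise yields a unique $\v\in\C([0,\widehat T];\J_p)$ with $\v=\F(\v)$, and uniqueness is preserved under the natural $\mathscr F_t$-adaptedness because $\w$ is adapted. Setting $\u=\v+\w$ then gives the desired $\J_p$-valued $\mathscr F_t$-adapted solution with $\mathbb{P}$-a.s.\ continuous trajectories, satisfying the mild formulation \eqref{1.14}. The main obstacle, in contrast with Theorem \ref{thm3.3}, is that one does not have an $\mathrm{L}^2(\Omega)$ moment estimate as in \eqref{se4}; only the $\mathbb{P}$-a.s.\ finiteness \eqref{49} is available. Consequently all constructions, including the choice of $\widehat T$, must be performed pathwise rather than in moment, and care is needed to verify that $\widehat T$ so obtained is a stopping time and that the resulting $\u$ is $\mathscr F_t$-adapted and possesses the claimed continuous modification.
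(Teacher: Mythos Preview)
Your proposal is correct and follows exactly the approach indicated in the paper, which simply states that the theorem ``can be established in a similar way as that of Theorems \ref{thm3.2} and \ref{thm3.3}'' after recording the pathwise regularity \eqref{49} of the fBm stochastic convolution. Your write-up is in fact more detailed than the paper's, and your closing remarks about the absence of an $\mathrm{L}^2(\Omega)$ moment bound and the need for a measurable choice of $\widehat T$ go slightly beyond what the paper addresses.
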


\begin{remark}\label{rem4.3}
	One can also consider the stochastic CBF equations perturbed by $\alpha$-regular Volterra processes as 
	  \begin{equation}\label{50}
	 \left\{
	 \begin{aligned}
	 {\d\u(t)}+[\A\u(t)+\B(\u(t))+\mathcal{C}(\u(t))]\d t&=\Phi\d\mathcal{B}(t), \\
	 \u(0)&=\x,
	 \end{aligned}\right. 
	 \end{equation}
	 where $\Phi\in\mathcal{L}(\H,\J_p)$ satisfies 
	 \eqref{4.7} and $\mathcal{B}$ is an infinite-dimensional $\alpha$-regular cylindrical Volterra process with $\alpha\in(0,\frac{1}{2})$, which belongs to a finite Wiener chaos (see \cite{PCBM} for more details on $\alpha$-regular Volterra processes). Then for $$\alpha>\frac{d}{4}-\frac{1}{2},\ d=2,3,$$ the process $$	\w(t)=\int_0^te^{-(t-s)\A}\Phi\d\mathcal{B}(s),$$ has a modification in $\mathrm{C}([0,T];\J_p)$, $p\in[\frac{2}{1+2\alpha},\infty),$ $\mathbb{P}$-a.s. Thus a result similar to Theorem \ref{thm3.4} can be obtained in this case also for the system \eqref{50}, that is, the existence and uniqueness of a  mild solution $$\u(t)=e^{-t\A}\x-\int_0^te^{-(t-s)\A}[\B(\u(s))+\mathcal{C}(\u(s))]\d s+\int_0^te^{-(t-s)\A}\Phi\d\mathcal{B}(s),$$ for $t\in[0,\overline{T}]$, where $0<\overline{T}<T$ is a random time,  to the system \eqref{50} with $\mathbb{P}$-a.s. continuous modification with trajectories in $\J_p$, for $\max\left\{d,\frac{d(r-1)}{2}\right\}<p<\infty$. 
\end{remark}

\vskip 0.2 cm 
\noindent\textbf{Conclusions and future plans:} The existence and uniqueness of a local mild solution in $\L^p(\R^d)$ with $\max\left\{d,\frac{d(r-1)}{2}\right\}<p<\infty$ for deterministic and stochastic CBF equations in $\R^d$ (for various kinds of noises)  is established in this work. The case of $p=\max\left\{d,\frac{d(r-1)}{2}\right\}$  is an interesting problem and it will be addressed in a future work (for similar works, see \cite{Kato5} for the deterministic NSE and \cite{MTSS} for stochastic NSE).

\vskip 0.2 cm 
 \medskip\noindent
{\bf Acknowledgments:} M. T. Mohan would  like to thank the Department of Science and Technology (DST), India for Innovation in Science Pursuit for Inspired Research (INSPIRE) Faculty Award (IFA17-MA110). 

\vskip 0.2 cm 
\noindent\textbf{Conflict of interest:} The author has no conflicts of interest to declare that are relevant to the content of this article.

\end{document}